
\documentclass[11pt]{amsart}

\usepackage{latexcad}

\addtolength{\textwidth}{20mm}\addtolength{\hoffset}{-10mm}
\addtolength{\textheight}{20mm}\addtolength{\voffset}{-10mm}

\newtheorem{theorem}{Theorem}[section]
\newtheorem{corollary}[theorem]{Corollary}
\newtheorem{lemma}[theorem]{Lemma}

\newtheorem{remark}[theorem]{Remark}
\newtheorem{example}[theorem]{Example}

\newtheorem{problem}[theorem]{Problem}

\def\ldiv{\setminus}
\def\rdiv{/}
\def\refover#1{\overset{\eqref{#1}}=}

\title[Equational Foundations for Loops]{A Scoop from Groups:
Equational Foundations for Loops}

\author{J.~D.~Phillips}

\address{Department of Mathematics \& Computer Science, Wabash College,
Crawfordsville, Indiana 47933, U.S.A.}

\email{phillipj@wabash.edu}

\author{Petr Vojt\v{e}chovsk\'y}

\address{Department of Mathematics, University of Denver, 2360 S Gaylord St,
Denver, CO, 80208, U.S.A.}

\email{petr@math.du.edu}

\begin{document}

\begin{abstract}
Groups are usually axiomatized as algebras with an associative binary
operation, a two-sided neutral element, and with two-sided inverses. We show in
this note that the same simplicity of axioms can be achieved for some of the
most important varieties of loops. In particular, we investigate loops of
Bol-Moufang type in the underlying variety of magmas with two-sided inverses,
and obtain ``group-like'' equational bases for Moufang, Bol and C-loops. We
also discuss the case when the inverses are only one-sided and/or the neutral
element is only one-sided.
\end{abstract}

\keywords{inverse property loop, Bol loop, Moufang loop, C-loop, equational
basis, magma with inverses}

\subjclass{Primary: 20N05. Secondary: 03C05.}

\maketitle

\section{Magmas, semigroups, and loops}

\setlength{\unitlength}{1.5mm}
\begin{figure}[ht]\begin{center}\input{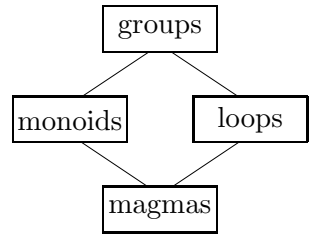}\end{center}
\caption{Two paths from magmas to groups.}\label{Fg:Quadrangle}
\end{figure}

We call a set with a single binary operation a \emph{groupoid}, and a groupoid
with a two-sided neutral element a \emph{magma}.\footnote{The definitions of
\emph{groupoid} and \emph{magma} are often interchanged in the literature.}
There are two natural paths from magmas to groups, as illustrated in Figure
\ref{Fg:Quadrangle}. One path leads through the \emph{monoids}---these are the
associative magmas. The other path leads through the \emph{loops}---these are
magmas in which every equation $x\cdot y=z$ has a unique solution whenever two
of the elements $x$, $y$, $z$ are specified. Since groups are precisely loops
that are also monoids, loops are known colloquially as ``nonassociative
groups.''

The theory of monoids is well-developed and widely known. Loops have a deep and often
elegant, but less well-known theory (see \cite{Pf2000} for
historical notes on loop theory, \cite{Br1971} for the first systematic
account of loops, and \cite{Pf1990} for a modern introduction to loop
theory). This is particularly
unfortunate given the deep and fruitful connections between loops and:
\begin{enumerate}

\item[(i)] combinatorics (Cayley tables of finite loops are normalized Latin squares \cite{DeKe};
Steiner loops describe Steiner triple systems \cite{CoRo}),

\item[(ii)] group theory (multiplication groups and automorphism groups of
loops often yield classical groups \cite{Do}, \cite{SpVe2000}; loops play a
role in the construction of the Monster sporadic group \cite{Co}),

\item[(iii)] division algebras (nonzero octonions under multiplication form a loop
\cite{SpVe2000}),

\item[(iv)] nonassociative algebras (alternative algebras \cite{GoMi}, Jordan algebras
\cite{Sc}),

\item[(iv)] projective geometry (generalized polygons, Moufang planes
\cite{TiWe2003}),

\item[(v)] special relativity (relativistic operations can be described by
loops \cite{Un}, \cite{Ki}).
\end{enumerate}

We believe that one of the reasons why loops are not more widely known is that
\emph{they cannot be defined equationally in the variety of magmas}, since they
are not closed under the taking of homomorphic images. This peculiar property
of loops resurfaces every now and then (most recently in \cite{Problem}), and
it was first observed by Bates and Kiokemeister \cite{BaKi}.

The standard way out of this impasse, due to Evans \cite{Ev}, is to introduce
two additional binary operations $\ldiv$, $\rdiv$, and demand that
\begin{equation}\label{Eq:Evans}
    x\cdot(x\ldiv y)=y,\quad (x\rdiv y)\cdot y = x,\quad
    (x\cdot y)\rdiv y = x,\quad x\ldiv(x\cdot y)=y,\quad
    x\rdiv x=x\ldiv x=1.
\end{equation}
Indeed, we obtain loops, since the axioms \eqref{Eq:Evans} imply that $x\ldiv
y$ is the unique solution $z$ to the equation $x\cdot z=y$, and similarly for
$x\rdiv y$.

While this approach solves the problem in principle, it is somewhat awkward. In
the end, the three operations $\cdot$, $\ldiv$, $\rdiv$ can be reconstructed
from any one of them!

The purpose of this note is to show that there is a much better solution for
some (but not all) of the most studied varieties of loops. We prove:

\begin{theorem}\label{Th:TwoSided}
Let $Q$ be a magma with two-sided inverses, that is, $1\cdot x = x\cdot 1 = x$
and $x\cdot x^{-1}=x^{-1}\cdot x=1$ holds for every $x\in Q$. If $Q$ satisfies
any of \eqref{Eq:LB}, \eqref{Eq:M1}, \eqref{Eq:M2}, \eqref{Eq:C} defined below,
then $Q$ is a loop.
\end{theorem}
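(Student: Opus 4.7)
The plan is to reduce the theorem to the \emph{inverse property} (IP): $x^{-1}(xy)=y$ and $(yx)x^{-1}=y$ for all $x,y\in Q$. Granting IP, here is how $Q$ becomes a loop. The first half of IP yields left cancellation, and the second yields right cancellation. Left cancellation forces uniqueness of inverses: if $xy=1$, then $y=x^{-1}(xy)=x^{-1}\cdot 1=x^{-1}$. Since $x\cdot x^{-1}=x^{-1}\cdot x=1$ exhibits $x$ as a two-sided inverse of $x^{-1}$, this uniqueness yields $(x^{-1})^{-1}=x$. Now, given $a,b\in Q$, applying LIP with $a^{-1}$ in place of $x$ gives $a(a^{-1}b)=(a^{-1})^{-1}(a^{-1}b)=b$, so $a^{-1}b$ solves $ax=b$, and left cancellation makes the solution unique; symmetrically, $y=ba^{-1}$ is the unique solution of $ya=b$. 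Every equation $x\cdot y=z$ is therefore uniquely solvable in the remaining unknown whenever two of $x,y,z$ are specified, which is the loop property.

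The entire content of the theorem is thus: each of (LB), (M1), (M2), (C) implies IP when assumed in a magma with two-sided inverses. I plan to verify this identity by identity, using only substitution in the given identity together with the rules $1\cdot u=u\cdot 1=u$ and $u\cdot u^{-1}=u^{-1}\cdot u=1$. The strategy in every case is to substitute $1$ or an inverse $x^{-1}$ into one or two variable slots so that one side of the identity collapses completely, then read off LIP or RIP. For a left-Bol-type identity of the shape $x(y(xz))=(x(yx))z$, for instance, the substitution $y=x^{-1}$ collapses the right-hand side to $xz$, giving $x(x^{-1}(xz))=xz$; combined with a companion substitution such as $z=x^{-1}$, or applied together with a second instance of the identity, this extracts LIP and RIP. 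The Moufang identities (M1), (M2) behave analogously, and the C-identity is symmetric enough that the derivations of LIP and RIP run in parallel.

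The main obstacle will be this identity-by-identity bookkeeping, and in particular the discipline of \emph{not} invoking cancellation or uniqueness of inverses while deriving IP, since those facts are only consequences of IP. Care is also needed to choose substitutions that do not accidentally collapse the desired conclusion into a triviality; in several cases the derivation requires two substitutions, and in at least one case one expects to apply the identity twice in order to peel off the outer $x$ or the outer $x^{-1}$. Once IP has been secured for each of the four identities, the uniform argument in the first paragraph completes the proof.
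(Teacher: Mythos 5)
There is a genuine gap in your reduction. It is not true that \eqref{Eq:LB} together with two-sided inverses implies the full inverse property: a magma with inverses satisfying \eqref{Eq:LB} is (by the very theorem being proved) exactly a left Bol loop, and left Bol loops have the \emph{left} inverse property but in general not the right one --- a left Bol loop with the right inverse property is already Moufang, and non-Moufang left Bol loops exist (the smallest have order $8$). Concretely, the substitution $z=x^{-1}$ in \eqref{Eq:LB} yields $(x\cdot yx)x^{-1}=xy$, not $(yx)x^{-1}=y$, and no sequence of substitutions can produce RIP because it is simply not a consequence of the hypotheses. So your uniform first paragraph, which is correct as far as it goes, cannot be fed by the \eqref{Eq:LB} case. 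What is needed instead (and what the paper does) is to establish LIP only, conclude via Lemma \ref{Lm:LIPMagmas} that all left translations are bijections, and then exploit $(y\cdot xy)y^{-1}=yx$ directly: if $xy=z$ then $x=y^{-1}\cdot (yz)y^{-1}$, which together with bijectivity of the left translations shows each right translation is a bijection without RIP ever holding. (Your plan is sound for \eqref{Eq:M1}, \eqref{Eq:M2} and \eqref{Eq:C}, since Moufang loops and C-loops do have the inverse property.)

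A secondary concern is that the ``bookkeeping'' you defer is where the real work lies. For \eqref{Eq:LB}, the substitution $y=x^{-1}$ gives $x(x^{-1}\cdot xz)=xz$, but you cannot peel off the outer $x$ until injectivity of $L_x$ is available --- which is precisely what LIP is supposed to deliver; the paper escapes this circularity with the left alternative law (from $y=1$) and a three-step chain of auxiliary identities. Likewise, extracting LIP from \eqref{Eq:C} requires first proving $(x^{-1})^2=(x^2)^{-1}$, since the natural computation only yields $x^{-1}\cdot xy=(x^{-1})^2x^2\cdot y$. For \eqref{Eq:M1} and \eqref{Eq:M2} the efficient route is to derive flexibility (set $z=1$, resp.\ $x=1$) and reduce to the Bol case rather than to rederive IP from scratch.
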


This means that Bol loops, Moufang loops and C-loops can be axiomatized in a
manner completely analogous to groups.

We establish stronger (but perhaps less natural) results than Theorem
\ref{Th:TwoSided} upon looking at groupoids with a one-sided neutral element
and/or one-sided inverses.

\subsection{The Dot Convention}

We will write $xy$ instead of $x\cdot y$, and reserve $\cdot$ to indicate
parentheses and hence the priority of multiplication. For instance, $x\cdot
yz$ stands for $x\cdot (y\cdot z)$. This convention is common in
nonassociative algebra.

\section{The inverse property}

\noindent Every element $x$ of a magma $M$ determines two maps $M\to M$: the
\emph{left translation} $L_x:y\mapsto xy$, and the \emph{right translation}
$R_x:y\mapsto yx$. The equations $ax = b$, $ya=b$ have unique solutions $x$,
$y$ in $M$---that is, $M$ is a loop---if and only if all translations are
bijections of $M$.

We shall say that a magma $M$ is \emph{with inverses} (or that it \emph{has
inverses}) if for every $x\in M$ there is $y\in M$ satisfying $xy=yx=1$. We
then call $y$ an \emph{inverse} of $x$, noting that $x$ can have several
inverses.

We say that a magma $M$ has the \emph{left inverse property} if for every $x\in
M$ there is $x^\lambda\in M$ such that $x^\lambda\cdot xy=y$ for every $y\in
M$. Similarly, $M$ has the \emph{right inverse property} if for every $x\in M$
there is $x^\rho\in M$ such that $yx\cdot x^\rho=y$ for every $y\in M$. An
\emph{inverse property magma} is then a magma that has both the left inverse
property and the right inverse property.

\begin{lemma}\label{Lm:LIPMagmas}
If $M$ is a magma that satisfies the left inverse property, then $M$ is with
inverses, and the unique inverse of $x$ is $x^{-1}=x^\lambda$. Moreover,
$(x^{-1})^{-1}=x$, and all left translations are bijections of $M$.
\end{lemma}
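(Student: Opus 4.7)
The plan is to unpack the left inverse property identity $x^\lambda\cdot xy = y$ by specializing $y$ and by applying it to the element $x^\lambda$ itself.

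First I would derive $x^\lambda x = 1$ by setting $y=1$ in the defining identity, using only that $1$ is a right neutral element. This already shows $x^\lambda$ is a left inverse of $x$ in the usual sense. Next, the key maneuver: apply the left inverse property to the element $x^\lambda$, which guarantees some $(x^\lambda)^\lambda$ satisfying $(x^\lambda)^\lambda\cdot x^\lambda z = z$ for all $z$. Setting $z = x$ and invoking the identity $x^\lambda x = 1$ already proved, together with $(x^\lambda)^\lambda\cdot 1 = (x^\lambda)^\lambda$, yields $(x^\lambda)^\lambda = x$. Feeding this back into the first step applied to $x^\lambda$, i.e. $(x^\lambda)^\lambda\cdot x^\lambda = 1$, gives $x\cdot x^\lambda = 1$. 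So $x^\lambda$ is a two-sided inverse of $x$, and $M$ has inverses.

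For uniqueness, suppose $y$ is any inverse of $x$; in particular $xy = 1$. Multiply by $x^\lambda$ on the left via the defining identity: $y = x^\lambda\cdot xy = x^\lambda\cdot 1 = x^\lambda$. Hence the inverse is unique, legitimizing the notation $x^{-1} = x^\lambda$, and the equality $(x^\lambda)^\lambda = x$ from the previous paragraph becomes $(x^{-1})^{-1} = x$.

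Finally, for the bijectivity of the left translation $L_x$: the defining identity says $L_{x^{-1}}\circ L_x = \mathrm{id}_M$, so $L_x$ is injective. Applying the same identity with $x$ replaced by $x^{-1}$ and using $(x^{-1})^{-1} = x$, we get $L_x\circ L_{x^{-1}} = \mathrm{id}_M$, so $L_x$ is also surjective, hence a bijection with inverse $L_{x^{-1}}$. The only mildly nonobvious step is the trick of applying the left inverse property to $x^\lambda$ itself in order to obtain $(x^\lambda)^\lambda = x$; everything else is a routine chase through the axioms.
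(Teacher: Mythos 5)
Your proof is correct and follows essentially the same route as the paper's: derive $x^\lambda x=1$ by setting $y=1$, apply the left inverse property to $x^\lambda$ itself to get $(x^\lambda)^\lambda=x$ and hence $xx^\lambda=1$, deduce uniqueness by cancelling with $x^\lambda$, and obtain bijectivity of $L_x$ from the two identities $x^{-1}\cdot xy=y$ and $x\cdot x^{-1}y=y$. No gaps.
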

\begin{proof}
For $x\in M$ we have $1=x^\lambda\cdot x1=x^\lambda x$. Then
$x=(x^\lambda)^\lambda\cdot x^\lambda x = (x^\lambda)^\lambda$ and
$xx^\lambda = (x^\lambda)^\lambda x^\lambda = 1$. Thus $x^\lambda=x^{-1}$ is
an inverse of $x$, and it is unique: if $xx^*=1$ for some $x^*$ then
$x^*=x^{-1}\cdot xx^* = x^{-1}$.

If $xy=xz$, then $y=x^{-1}\cdot xy=x^{-1}\cdot xz=z$. Furthermore, $x\cdot
x^{-1}y = (x^{-1})^{-1}\cdot x^{-1}y=y$. Thus $L_x$ is a bijection of $M$.
\end{proof}

We can now axiomatize inverse property loops in a manner analogous to groups:

\begin{theorem}\label{Th:IPMagmas}
Inverse property loops are exactly inverse property magmas and can be defined
equationally by
\begin{displaymath}
    x\cdot 1 = 1\cdot x = x,\quad x^\lambda\cdot xy=y=yx\cdot x^\rho,
\end{displaymath}
or by
\begin{displaymath}
    x\cdot 1 = 1\cdot x = x,\quad x^{-1}\cdot xy=y=yx\cdot x^{-1}.
\end{displaymath}
\end{theorem}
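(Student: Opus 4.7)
The plan is to reduce everything to Lemma \ref{Lm:LIPMagmas} together with its left--right dual, and then split the argument into two tasks: the class-theoretic identification of inverse property loops with inverse property magmas, and the verification of the two equational axiomatizations.

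First I would show that every inverse property magma $M$ is a loop. The inclusion ``loop implies magma'' is obvious. For the converse, Lemma \ref{Lm:LIPMagmas} already shows that in any left inverse property magma every $L_x$ is bijective. The entire proof of the lemma goes through verbatim after interchanging left with right and $\lambda$ with $\rho$; this yields the dual statement that in any right inverse property magma the right inverse $x^\rho$ is the unique inverse of $x$ and every $R_x$ is bijective. Hence in $M$ all translations are bijective, which is exactly the loop condition.

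Next I would verify the two equational axiomatizations. Any model of either system is by inspection an inverse property magma, since the equations literally say that $x^\lambda$ and $x^\rho$ (respectively, $x^{-1}$) witness the left and right inverse properties; so by the previous step each model is an inverse property loop. Conversely, given an inverse property loop $Q$, the lemma and its dual furnish elements $x^\lambda$ and $x^\rho$ satisfying the required equations, and both are equal to the unique two-sided inverse $x^{-1}$. Hence the unary operations demanded by either signature exist and are well-defined as terms on~$Q$, and all stated identities hold.

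The only subtle point, and the one I expect to be the main thing to justify, is that the second axiomatization lets a single symbol $x^{-1}$ play both the left and right inverse role. This follows immediately from the uniqueness clause of Lemma \ref{Lm:LIPMagmas}: once we know that $x^\lambda$ and $x^\rho$ are each forced to coincide with the unique two-sided inverse of $x$, they are equal to each other, so we may simultaneously interpret both as the single operation $x\mapsto x^{-1}$ without changing the class axiomatized.
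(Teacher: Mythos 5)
Your proposal is correct and follows essentially the same route as the paper: reduce everything to Lemma \ref{Lm:LIPMagmas} and its left--right dual, and identify $x^\lambda$ with $x^\rho$. The only cosmetic difference is that you obtain $x^\lambda=x^\rho$ from the uniqueness clause of the lemma, whereas the paper computes it directly via $x^\lambda=x^\lambda 1=x^\lambda\cdot xx^\rho=x^\rho$; both derivations rest on the same facts.
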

\begin{proof}
By the inverse properties, we have $1 = x^\lambda x$ and $1=1x\cdot x^\rho =
xx^\rho$. Then, $x^\lambda=x^\lambda1=x^\lambda\cdot xx^\rho = x^\rho$. We
are done by Lemma \ref{Lm:LIPMagmas} and its dual.
\end{proof}

\section{Inverses in loops of Bol-Moufang type}

\setlength{\unitlength}{0.94mm}
\begin{figure}[ht]\begin{center}\input{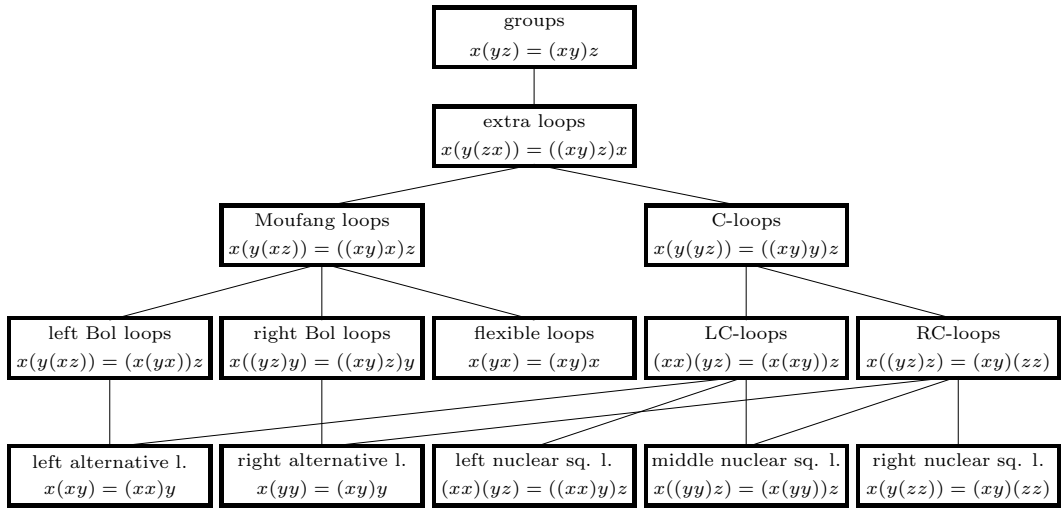}\end{center}
\caption{The varieties of loops of Bol-Moufang type.}\label{Fg:BM}
\end{figure}

\noindent Just as the theory of monoids focuses on those monoids satisfying
certain identities, so too for loops. Among the most investigated loops are the
so-called \emph{loops of Bol-Moufang type}, whose defining identities can be
found in Figure \ref{Fg:BM}. The figure also depicts all inclusions (but not
meets and joins) among varieties of loops of Bol-Moufang type. For more
details, see \cite{Fe} and \cite{PhVo2004}.

Since we are interested in these loops from the viewpoint of magmas with
inverses, let us first settle the question which varieties of loops of
Bol-Moufang type \emph{have} inverses.

By \cite{Ro}, left Bol loops satisfy the left inverse property. Dually, right
Bol loops satisfy the right inverse property. By \cite{Fe}, LC-loops satisfy
the left inverse property. Dually, RC-loops satisfy the right inverse property.
Lemma \ref{Lm:LIPMagmas} and its dual then imply that left (right) Bol loops
and LC(RC)-loops have inverses. Flexible loops also have inverses: if $x'$
satisfies $x'x=1$, then $(xx')x = x(x'x) = x$, and $xx'=1$ follows by
cancelation. On the other hand, consider
\begin{displaymath}
    Q_1 = \begin{array}{c|cccccc}
    &0&1&2&3&4&5\\
    \hline
    0&0&1&2&3&4&5\\
    1&1&5&0&4&2&3\\
    2&2&4&5&0&3&1\\
    3&3&0&4&5&1&2\\
    4&4&2&3&1&5&0\\
    5&5&3&1&2&0&4
    \end{array},\quad\quad
    Q_2 = \begin{array}{c|cccccc}
    &0&1&2&3&4&5\\
    \hline
    0&0&1&2&3&4&5\\
    1&1&5&0&4&2&3\\
    2&2&4&5&0&3&1\\
    3&3&0&4&5&1&2\\
    4&4&3&1&2&5&0\\
    5&5&2&3&1&0&4
    \end{array}.
\end{displaymath}
Then $Q_1$ is a left alternative loop without inverses, and $Q_2$ is a left,
middle, and right nuclear square loop without inverses. Hence the loops of
Bol-Moufang type with inverses occupy precisely the top four rows of Figure
\ref{Fg:BM}.

\begin{remark}
It is an open question, due to W.~D.~Smith \cite{Sm}, whether there is a
finite, left alternative and right alternative loop without inverses. For an
infinite example, see \cite{OrVo}.
\end{remark}

\section{Equational bases for Bol, Moufang, and C-loops}\label{Sc:Eq}

Now that we know which loops of Bol-Moufang type have inverses, we proceed to
obtain simple axiomatizations for three varieties: Bol, Moufang, and C-loops.

Let us label the \emph{alternative laws} by
\begin{equation}\label{Eq:LA}
    x\cdot xy=xx\cdot y,\tag{LA}
\end{equation}
and
\begin{equation}\label{Eq:RA}
    x\cdot yy=xy\cdot y.\tag{RA}
\end{equation}

\subsection{Bol loops}

Left Bol loops with the \emph{automorphic inverse property}
$(xy)^{-1}=x^{-1}y^{-1}$ play an important role in the arithmetic of special
relativity \cite{Ki}, \cite{Un}. Some of the most outstanding problems in loop
theory are concerned with Bol loops, and several of them were recently solved
by G.~P.~Nagy \cite{Na}.

Label the \emph{left Bol identity} as
\begin{equation}\label{Eq:LB}
    (x \cdot yx)z = x(y \cdot xz).\tag{LB}
\end{equation}

The following theorem and its generalizations have a convoluted history, cf.
\cite[pp.\ 50--51]{Ki}. It is the first result concerning a variety of loops of
Bol-Moufang type within the variety of magmas with inverses. We believe that it
was first observed by M.~K.~Kinyon, with a different proof:

\begin{theorem}\label{Th:Bol}
A magma with inverses satisfying the left Bol identity \eqref{Eq:LB} is a loop.
Thus, left Bol loops are defined equationally by
\begin{displaymath}
    1\cdot x = x\cdot 1 = x,\quad x\cdot x^{-1}=x^{-1}\cdot x = 1,
    \quad (x\cdot yx)z = x(y\cdot xz).
\end{displaymath}
\end{theorem}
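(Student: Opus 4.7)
The plan is to show that every left translation $L_x$ and every right translation $R_x$ is a bijection of $Q$, whence $Q$ is a loop. I will establish surjectivity of both families using three targeted substitutions in \eqref{Eq:LB}, then derive the left inverse property, and finally handle injectivity of the right translations.

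First I will make three substitutions. Setting $y=1$ collapses \eqref{Eq:LB} to the left alternative law $(xx)z=x(xz)$. Setting $y=x^{-1}$ gives $xz=x(x^{-1}\cdot xz)$; equivalently, $L_xL_{x^{-1}}L_x=L_x$, and by the same token $L_{x^{-1}}L_xL_{x^{-1}}=L_{x^{-1}}$. The third, decisive substitution is $y=(xz)^{-1}$: the right side of \eqref{Eq:LB} then becomes $x((xz)^{-1}\cdot xz)=x\cdot 1=x$, so $(x\cdot(xz)^{-1}x)z=x$. This exhibits every element of $Q$ as a value of $R_z$, so each $R_z$ is surjective.

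With $R_x$ surjective for every $x$, I can pick, for each $x$, an element $y$ such that $yx=x^{-1}$. Substituting this $y$ into \eqref{Eq:LB} collapses the left-hand side to $(x\cdot x^{-1})z=z$, so $z=x(y\cdot xz)$ lies in the image of $L_x$, showing $L_x$ is surjective as well. I can then right-cancel $L_x$ in $L_xL_{x^{-1}}L_x=L_x$ to obtain $L_xL_{x^{-1}}=\mathrm{id}$, and the same argument applied with $x^{-1}$ in place of $x$ gives $L_{x^{-1}}L_x=\mathrm{id}$. This is the left inverse property, so Lemma~\ref{Lm:LIPMagmas} makes $L_x$ a bijection for every $x$.

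For $R_x$ injectivity I will use the identity $(x\cdot yx)x^{-1}=xy$, obtained from \eqref{Eq:LB} with $z=x^{-1}$. If $ax=bx$ then $x\cdot ax=x\cdot bx$, whence $(x\cdot ax)x^{-1}=(x\cdot bx)x^{-1}$ gives $xa=xb$, and left cancellation (now available) yields $a=b$. Since $R_x$ was already shown to be surjective, it is bijective, and $Q$ is a loop. I expect the main obstacle to be spotting the substitution $y=(xz)^{-1}$: it is slightly unusual in that it involves the inverse of a composite expression rather than of a variable, but once noticed it is precisely what converts the mere existence of inverses into surjectivity of $R_z$, and that one fact unlocks all of the remaining steps.
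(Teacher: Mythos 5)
Your proof is correct, but the engine driving it is genuinely different from the paper's. The paper derives the left inverse property by a purely equational chain: from $y=1$ it gets the left alternative law \eqref{Eq:LA}, combines it with \eqref{Eq:LB} to prove $x^{-1}\cdot x(x\cdot x^{-1}y)=y$, $x(x^{-1}\cdot xy)=xy$ and $x(x\cdot x^{-1}y)=xy$, and extracts $x^{-1}\cdot xy=y$; Lemma \ref{Lm:LIPMagmas} then settles the left translations, and the instance $(y\cdot xy)y^{-1}=yx$ of \eqref{Eq:LB} settles the right ones. You instead argue semantically via surjectivity: the substitution $y=(xz)^{-1}$ --- the inverse of a compound term, which the paper never uses --- gives $(x\cdot (xz)^{-1}x)z=x$, so every $R_z$ is onto; choosing $y$ with $yx=x^{-1}$ then makes every $L_x$ onto, and cancelling surjective maps in $L_xL_{x^{-1}}L_x=L_x$ and $L_{x^{-1}}L_xL_{x^{-1}}=L_{x^{-1}}$ yields the left inverse property. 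Two small remarks: your second displayed translation identity comes from substituting $x^{-1}$ for $x$ and $x$ for $y$ in \eqref{Eq:LB} (not literally ``the same token,'' since $(x^{-1})^{-1}=x$ is not yet known); it does go through because the inverses are two-sided. And the left alternative law you derive at the outset is never actually used. Your final injectivity step for $R_x$ uses exactly the paper's identity $(x\cdot yx)x^{-1}=xy$. What the paper's route buys is a first-order equational derivation of $x^{-1}\cdot xy=y$, in keeping with the equational-basis theme and with what \texttt{Prover9} produces; what yours buys is a shorter and more transparent argument, at the price of a nonconstructive choice (the $y$ with $yx=x^{-1}$ is not a term in the language), which is immaterial for the conclusion that $Q$ is a loop.
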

\begin{proof}
Assume that $M$ is a magma with inverses satisfying \eqref{Eq:LB}. By setting
$y = 1$ in \eqref{Eq:LB} we see that the left alternative law holds for $M$.

We now show that $M$ has the left inverse property:
\begin{gather}
    x^{-1}\cdot x(x\cdot x^{-1}y) \refover{Eq:LA}
        x^{-1}(xx\cdot x^{-1}y) \refover{Eq:LB} (x^{-1}\cdot(xx)x^{-1})y
        \refover{Eq:LA} y,\label{Eq:AuxLB1}\\
    x(x^{-1}\cdot xy) \refover{Eq:LB} xy,\label{Eq:AuxLB2}\\
    x(x\cdot x^{-1}y) \refover{Eq:AuxLB2}
        x(x^{-1}\cdot x(x\cdot x^{-1}y)) \refover{Eq:AuxLB1}
        xy,\label{Eq:AuxLB3}\\
    x^{-1}\cdot xy \refover{Eq:AuxLB3}
        x^{-1}\cdot x(x\cdot x^{-1}y) \refover{Eq:AuxLB1} y.\nonumber
\end{gather}
By Lemma \ref{Lm:LIPMagmas}, all left translations are bijections of $M$. When
$xy=z$ then
\begin{displaymath}
    yz\cdot y^{-1} = (y\cdot xy)y^{-1} \refover{Eq:LB} yx,
\end{displaymath}
and thus $x = y^{-1}\cdot (yz)y^{-1}$. This means that the right translation
$R_y$ is a bijection.
\end{proof}

\subsection{Moufang loops}

These four \emph{Moufang identities} are equivalent for loops:
\begin{align}
    (xy\cdot x)z &= x(y\cdot xz),\label{Eq:M1}\tag{M1}\\
    x(y\cdot zy) &= (xy\cdot z)y,\label{Eq:M2}\tag{M2}\\
    xy\cdot zx &= x(yz\cdot x),\label{Eq:M3}\tag{M3}\\
    xy\cdot zx &= (x\cdot yz)x.\label{Eq:M4}\tag{M4}
\end{align}
Moufang loops occur naturally in division algebras and in projective geometry,
as we have already mentioned in the introduction.

Any of the first two identities can be used to characterize Moufang loops among
magmas with inverses:

\begin{theorem}\label{Th:Moufang}
A magma with inverses satisfying the Moufang identity \eqref{Eq:M1} or
\eqref{Eq:M2} is a Moufang loop. Thus, Moufang loops are defined equationally
by
\begin{displaymath}
    x\cdot 1 = 1\cdot x = x,\quad x\cdot x^{-1}=x^{-1}\cdot x = 1,
    \quad (xy\cdot x)z = x(y\cdot xz),
\end{displaymath}
or by
\begin{displaymath}
    x\cdot 1 = 1\cdot x = x,\quad x\cdot x^{-1}=x^{-1}\cdot x = 1,
    \quad x(y\cdot zy) = (xy\cdot z)y.
\end{displaymath}
\end{theorem}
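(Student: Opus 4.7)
The plan is to reduce each case to Theorem~\ref{Th:Bol} (or its left-right mirror) by first extracting flexibility from the given Moufang identity, then rewriting one side to expose a Bol identity.

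For \eqref{Eq:M1}, I would substitute $z=1$. Using $x\cdot 1 = x$ on both sides, the identity collapses to $xy\cdot x = x\cdot yx$, i.e., the flexible law. Substituting $x\cdot yx$ for $xy\cdot x$ on the left-hand side of \eqref{Eq:M1} then yields $(x\cdot yx)z = x(y\cdot xz)$, which is exactly the left Bol identity \eqref{Eq:LB}. By Theorem~\ref{Th:Bol}, the magma $M$ is a loop, and as a loop satisfying \eqref{Eq:M1} it is by definition a Moufang loop (and, by the standard equivalence of \eqref{Eq:M1}--\eqref{Eq:M4} in loops, it satisfies the remaining three Moufang identities as well).

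The case of \eqref{Eq:M2} is the left-right mirror. Setting $x=1$ in \eqref{Eq:M2} gives $y\cdot zy = yz\cdot y$, again flexibility. Using flexibility to rewrite $y\cdot zy$ as $(yz)y$ on the left-hand side of \eqref{Eq:M2} converts it into $x((yz)y) = ((xy)z)y$, which is the right Bol identity (see Figure~\ref{Fg:BM}). Invoking the left-right dual of Theorem~\ref{Th:Bol} then shows $M$ is a loop, and hence a Moufang loop.

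The only mildly delicate point I anticipate is the appeal to the dual of Theorem~\ref{Th:Bol}, since the theorem as printed covers only the left Bol case. However, the hypotheses (two-sided neutral element, two-sided inverses) and the conclusion (loop) are both symmetric under reversing the multiplication, so the proof of Theorem~\ref{Th:Bol} goes through verbatim for right Bol magmas with inverses. No new idea is needed beyond this observation together with the two flexibility derivations above.
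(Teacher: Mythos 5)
Your proposal is correct and follows the paper's proof essentially verbatim: substitute $z=1$ in \eqref{Eq:M1} (resp.\ $x=1$ in \eqref{Eq:M2}) to get flexibility, rewrite the identity as the left (resp.\ right) Bol identity, and invoke Theorem~\ref{Th:Bol} (resp.\ its dual). Your explicit remark that the dual of Theorem~\ref{Th:Bol} applies because the hypotheses and conclusion are mirror-symmetric is a point the paper leaves implicit, but it is the same argument.
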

\begin{proof}
Assume that $M$ is a magma with inverses satisfying \eqref{Eq:M1}. Substituting
$z=1$ into \eqref{Eq:M1} yields the flexible law $xy\cdot x = x\cdot yx$. Then
\eqref{Eq:M1} can be rewritten as \eqref{Eq:LB}, and Theorem \ref{Th:Bol} shows
that $M$ is a Moufang loop.

The case \eqref{Eq:M2} is similar (let $x=1$ in \eqref{Eq:M2}, and use the
right Bol identity).
\end{proof}

But the identities \eqref{Eq:M3}, \eqref{Eq:M4} do not work! Here is the Cayley
table of a magma with inverses that satisfies both \eqref{Eq:M3} and
\eqref{Eq:M4} but that clearly is not a loop:
\begin{displaymath}
\begin{array}{c|ccc}
      &0&1&2\\
     \hline
     0&0&1&2\\
     1&1&0&1\\
     2&2&1&0
\end{array}
\end{displaymath}

\subsection{C-loops}

Recall the \emph{C-identity}
\begin{equation}\label{Eq:C}
    x(y\cdot yz) = (xy\cdot y)z.\tag{C}
\end{equation}
C-loops were introduced by Fenyves \cite{Fe}. It is easy to see that every
Steiner loop (i.e., a loop arising from a Steiner triple system) is a C-loop
\cite{PV}. The standard Cayley-Dickson process extended beyond octonions
(dimension $8$) produces C-loops in every dimension $2^n$ \cite{KiPhVo}.
Although C-loops are not as well known as Bol and Moufang loops, we expect
their prominence to grow.

\begin{theorem}\label{Th:C} A magma with inverses satisfying the C-identity
\eqref{Eq:C} is a C-loop. Thus, C-loops are defined equationally by
\begin{displaymath}
    x\cdot 1 = 1\cdot x = x,\quad x\cdot x^{-1}=x^{-1}\cdot x = 1,
    \quad x(y\cdot yz) = (xy\cdot y)z.
\end{displaymath}
\end{theorem}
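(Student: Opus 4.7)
The plan is as follows. Let $Q$ be a magma with two-sided inverses satisfying \eqref{Eq:C}. First, I would derive the two alternative laws by routine substitutions: setting $x=1$ in \eqref{Eq:C} yields \eqref{Eq:LA}, while setting $z=1$ yields \eqref{Eq:RA}. A direct comparison of \eqref{Eq:C} with these two laws shows that every square $yy$ associates in the middle, i.e., $x((yy)z)=(x(yy))z$ for all $x,y,z\in Q$.

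The key step---and what I expect to be the main obstacle, since the most natural substitutions of $1$, $x$, or $x^{-1}$ into \eqref{Eq:C} tend to collapse to tautologies---is the non-obvious substitution $z=(y^{-1}y^{-1})w$ into \eqref{Eq:C}. I would simplify both sides using the middle-nuclear-square property, \eqref{Eq:RA}, and the special case $(xy\cdot y)y^{-1}=xy$ (which is \eqref{Eq:C} with $z=y^{-1}$), to obtain
\begin{displaymath}
    x\bigl(y(y^{-1}w)\bigr) = \bigl((xy)y^{-1}\bigr)w.
\end{displaymath}
Setting $x=1$ yields the reverse-LIP identity $y(y^{-1}w)=w$. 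From this I would deduce $(y^{-1})^{-1}=y$: if $z=(y^{-1})^{-1}$, then $y^{-1}z=1$ by definition, and hence $z=y(y^{-1}z)=y\cdot 1=y$. Substituting $y^{-1}$ in place of $y$ in the reverse-LIP identity then yields the left inverse property $y^{-1}(yw)=w$, and Lemma \ref{Lm:LIPMagmas} makes every left translation a bijection of $Q$.

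For the right inverse property, I would use the freshly established LIP. Substituting $c=b^{-1}d$ into \eqref{Eq:C} and collapsing $b(b^{-1}d)=d$ on the left gives $a(bd)=(ab\cdot b)(b^{-1}d)$. Specialising to $d=b^{-1}$, the left-hand side becomes $a$, while \eqref{Eq:RA} together with $(ab\cdot b)b^{-1}=ab$ reduces the right-hand side to $(ab)b^{-1}$. Hence $a=(ab)b^{-1}$, which is the right inverse property, and the dual of Lemma \ref{Lm:LIPMagmas} shows every right translation is bijective. Therefore $Q$ is a loop satisfying \eqref{Eq:C}, i.e., a C-loop, and the equational axiomatization stated in the theorem is immediate.
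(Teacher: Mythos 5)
Your proof is correct, and every step checks out: the middle-nuclear-square property for squares does follow from comparing \eqref{Eq:C} with \eqref{Eq:LA} and \eqref{Eq:RA}; the substitution $z=(y^{-1}y^{-1})w$ does reduce, via that property, \eqref{Eq:RA}, and the special case $(xy\cdot y)y^{-1}=xy$, to $x(y(y^{-1}w))=((xy)y^{-1})w$; and the subsequent deductions of $y(y^{-1}w)=w$, $(y^{-1})^{-1}=y$, the left inverse property, and finally $(ab)b^{-1}=a$ are all sound. However, your route is genuinely different from the paper's. The paper first computes $x^{-1}\cdot xy=((x^{-1})^2x^2)y$, thereby reducing the left inverse property to the single power identity $(x^{-1})^2=(x^2)^{-1}$, which it then establishes by a separate chain of manipulations; the right inverse property is then obtained for free from the self-duality of \eqref{Eq:C}. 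You instead bypass the power identity entirely with the substitution $z=(y^{-1})^2w$, which delivers $L_yL_{y^{-1}}=\mathrm{id}$ and hence the left inverse property directly, and you prove the right inverse property by an explicit computation rather than by duality. What each approach buys: the paper's argument isolates a reusable structural fact about C-magmas (inverses of squares are squares of inverses, which your argument never needs but which follows easily from either proof), and its duality appeal shortens the second half at the cost of a meta-argument; your version is entirely self-contained and arguably more mechanical, at the cost of one non-obvious substitution in place of the paper's non-obvious identity. Both proofs invoke Lemma \ref{Lm:LIPMagmas} and its dual in the same way to conclude that all translations are bijections.
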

\begin{proof}
First note that a magma satisfying the C-identity \eqref{Eq:C} satisfies both
alternative laws. To see this, set $x=1$ in \eqref{Eq:C} to obtain
\eqref{Eq:LA}, and $z=1$ in \eqref{Eq:C} to obtain \eqref{Eq:RA}.

Assume that $M$ is a magma with inverses satisfying \eqref{Eq:C}. Then
\begin{displaymath}
    x^{-1}\cdot xy \refover{Eq:C} x^{-1}\cdot x^{-1}(x\cdot xy)
    \refover{Eq:LA} (x^{-1})^2(x\cdot xy)
    \refover{Eq:C} ((x^{-1})^2x)x\cdot y
    \refover{Eq:RA} (x^{-1})^2 x^2\cdot y.
\end{displaymath}
Therefore, if $M$ satisfies $(x^{-1})^2 = (x^2)^{-1}$, it has the left
inverse property, and thus the inverse property, since \eqref{Eq:C} is
self-dual. We have
\begin{displaymath}
    x^{-1} \refover{Eq:LA} x^{-1}(x\cdot x(x^2)^{-1})
    \refover{Eq:C} (x^{-1}x)x\cdot (x^2)^{-1} =  x(x^2)^{-1},
\end{displaymath}
and hence $(x^{-1})^2$ is equal to
\begin{multline*}
    x^{-1}\cdot x(x^2)^{-1}
    \refover{Eq:RA} x^{-1}\cdot x(x^2\cdot (x^2)^{-1}(x^2)^{-1})
    \refover{Eq:LA} x^{-1}\cdot x(x(x\cdot (x^2)^{-1}(x^2)^{-1}))\\
    \refover{Eq:C} x(x\cdot (x^2)^{-1}(x^2)^{-1})
    \refover{Eq:LA} x^2\cdot (x^2)^{-1}(x^2)^{-1}
    \refover{Eq:RA}(x^2)^{-1}.
\end{multline*}
\end{proof}

This finishes the proof of Theorem \ref{Th:TwoSided}.

\section{Magmas with inverses satisfying an identity of Bol-Moufang
type}

We have seen many examples of identities of Bol-Moufang type. Here is the
general definition: an identity involving one binary operation $\cdot$ is said
to be of \emph{Bol-Moufang type} if it contains three distinct variables, if it
contains three distinct variables on each side, if precisely one of the
variables occurs twice on each side, if all other variables occur once on both
sides, and if the variables are ordered in the same way on both sides.

A systematic notation for identities of Bol-Moufang type was introduced in
\cite{PhVo2004}, according to
\begin{displaymath}
\begin{array}{cc}
    \begin{array}{c|c}
        A&xxyz\\
        B&xyxz\\
        C&xyyz\\
        D&xyzx\\
        E&xyzy\\
        F&xyzz
    \end{array}
    \quad\quad&
    \begin{array}{c|c}
        1&o(o(oo))\\
        2&o((oo)o)\\
        3&(oo)(oo)\\
        4&(o(oo))o\\
        5&((oo)o)o
    \end{array}
\end{array}
\end{displaymath}
For instance, $C25$ is the identity $x((yy)z)=((xy)y)z$. Any identity $Xij$
(with $i<j$) can be dualized to $X'j'i'$ (with $j'<i'$), following
\begin{displaymath}
    A'=F,\quad B'=E,\quad C'=C,\quad D'=D,\quad 1'=5,\quad 2'=4,\quad 3'=3.
\end{displaymath}

The equivalence classes for all identities of Bol-Moufang type \emph{in the
variety of loops} have essentially been determined already in \cite{Fe}, with
the programme completed in \cite{PhVo2004}. With respect to this equivalence we
can often replace identities of Bol-Moufang type by shorter identities, for
instance $x(x\cdot yz) = x(xy\cdot z)$ is equivalent to $x\cdot yz = xy\cdot
z$. Such short, equivalent identities are used in Figure \ref{Fg:BM}.

However, as we have seen while working with Moufang loops, the equivalence
classes do not carry over to magmas with inverses.

For the sake of completeness, we answer (with one exception) the following
question: \emph{Given an identity $I$ of Bol-Moufang type or an identity listed
in Figure \ref{Fg:BM}, is a magma with inverses satisfying $I$ necessarily a
loop?}

The answer is ``yes'' for: all identities equivalent to the C-identity, and all
identities equivalent to the left or right Bol identities. (In all three cases
the equivalence class consists of a single identity, and hence this is just a
restatement of the results in Section \ref{Sc:Eq}.)

The answer is ``no'' for: all identities equivalent to the left, middle, or
right nuclear square identities; all identities equivalent to the flexible
identity; all identities equivalent to the left or right alternative
identities; all identities equivalent to the LC- and RC-identities; and,
perhaps surprisingly, all identities equivalent to the extra identity.

There are $4$ Moufang identities of Bol-Moufang type, and they behave as
described in Section \ref{Sc:Eq}.

The answer is ``yes'' for the following identities equivalent to the
associative law: A24, A25, B34, B35, E13, E23, F14, F24.

The answer is ``no'' for the following identities equivalent to the associative
law: A12, A23, B12, B13, B24, C13, C23, C34, C35, D12, D13, D14, D25, D35, D45,
E24, E35, E45, F34, F45.

All omitted proofs and counterexamples are easy to obtain with \texttt{Prover9}
and \texttt{Mace} \cite{Mc}. Each proof takes only a fraction of a second to
find with a 2GHz processor, and the counterexamples are of order at most $6$.

We have accounted for all identities of Bol-Moufang type, except for B25 and
its dual E14, both of which are equivalent to the associative law in the
variety of loops.

\begin{problem}
Is every magma with inverses satisfying $x((yx)z) = ((xy)x)z$ a group?
\end{problem}

We were not able to resolve the problem despite devoting several days of
computer search to it. If a counterexample exists, it is of order at least
$14$. We would not be surprised to see that the problem holds for all finite
magmas but fails in the infinite case.

\section{One-sided neutral element and one-sided inverses}\label{Sc:OneSided}

Let $Q$ be a groupoid. An element $1\in Q$ is said to be a \emph{left}
(\emph{right}) \emph{neutral element} if $1\cdot x = x$ ($x\cdot 1=x$) holds
for every $x\in Q$. Given a possibly one-sided neutral element $1$, we say that
$x'$ is a \emph{left} (\emph{right}) \emph{inverse} of $x$ if $x'\cdot x=1$
($x\cdot x'=1$).

While defining groups in \cite[p. 4]{Hall}, Marshall Hall remarks that
associative groupoids with a right neutral element and with right inverses are
already groups. He points to \cite{Mann} for a discussion of associative
groupoids with a right neutral element and left inverses that are not groups.
Such groupoids are easy to find:

\begin{example}
Define multiplication on $Q=\{a,b\}$ by $xy = x$, and note that $Q$ is
associative. Moreover, $a$ is a right neutral element and $Q$ has left inverses
with respect to $a$. (By symmetry, $b$ is also a right neutral element and $Q$
has left inverses with respect to $b$.) But $Q$ does not have a two-sided
neutral element and hence is not a group.

Since associativity implies all identities of Bol-Moufang type, $Q$ also shows
that a groupoid satisfying an identity of Bol-Moufang type with a right neutral
element and left inverses is not necessarily a loop.
\end{example}

It view of Hall's remark, it is natural to ask whether Theorem
\ref{Th:TwoSided} can be analogously strengthened. We have:

\begin{theorem}\label{Th:OneSided} Let $Q$ be a groupoid with a left neutral
element and left inverses satisfying one of \eqref{Eq:LB}, \eqref{Eq:M1},
\eqref{Eq:M2}, \eqref{Eq:C}. Then $Q$ is a loop.
\end{theorem}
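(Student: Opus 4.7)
The plan is to reduce Theorem \ref{Th:OneSided} to Theorem \ref{Th:TwoSided} by showing that each of the four identities, together with the one-sided hypotheses, forces $1$ to be a two-sided neutral element and the left inverses to be right inverses. The pivot of the argument is the left inverse property $x^{-1}\cdot xy = y$. First I would extract auxiliary identities from each of \eqref{Eq:LB}, \eqref{Eq:M1}, \eqref{Eq:M2}, \eqref{Eq:C} by substituting $1$ into appropriate variables. For \eqref{Eq:LB} and \eqref{Eq:M1}, setting $y=1$ yields \eqref{Eq:LA} and setting $x=1$ yields the weak right-neutral identity $(y\cdot 1)z = yz$; for \eqref{Eq:M2}, setting $x=1$ gives the flexible law $y\cdot zy = yz\cdot y$, which converts \eqref{Eq:M2} into the right Bol identity; for \eqref{Eq:C}, setting $x=1$ gives \eqref{Eq:LA}, setting $z=1$ gives a weak form of \eqref{Eq:RA}, and setting $y=1$ gives $((x\cdot 1)\cdot 1)z = xz$. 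These derived facts let one replace $x\cdot 1$ by $x$ wherever it occurs as a left factor in a product, which is precisely what is needed to propagate the two-sided arguments into this setting.

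The main step is to establish the left inverse property for each of the four identities. For \eqref{Eq:LB} I would mimic Theorem \ref{Th:Bol}: the substitution $y=x^{-1}$ in \eqref{Eq:LB} still gives $xz = x(x^{-1}\cdot xz)$, because $(x\cdot 1)z = xz$ absorbs the missing $x\cdot 1 = x$. The harder step $x^{-1}\cdot x(x\cdot x^{-1}y) = y$ requires routing around the original proof's implicit uses of $xx^{-1}=1$ and $x\cdot 1 = x$; this is done by applying \eqref{Eq:LB} once more with judicious substitutions and exploiting that $(x^{-1}\cdot 1)\cdot x = 1$, so that $x^{-1}\cdot 1$ itself acts as a left inverse of $x$. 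The Moufang case \eqref{Eq:M1} reduces to \eqref{Eq:LB} via flexibility as in Theorem \ref{Th:Moufang}, while \eqref{Eq:M2} reduces to the right-Bol analog of the same computation. For \eqref{Eq:C} I would follow Theorem \ref{Th:C}, using $((x\cdot 1)\cdot 1)z = xz$ in place of $x\cdot 1 = x$ and the one-sided version of \eqref{Eq:RA} to reproduce the chain leading to $(x^{-1})^2 = (x^2)^{-1}$, and thence to LIP.

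Once LIP is in hand it propagates cleanly: the argument of Lemma \ref{Lm:LIPMagmas} (which only uses the left neutral element and LIP to derive injectivity of $L_x$) goes through, and applying LIP with $x$ replaced by $x^{-1}$ shows that $L_{x^{-1}}$ is a two-sided inverse of $L_x$, so $x\cdot x^{-1}y = y$ holds identically. Evaluating this at $y=x$ gives $x\cdot 1 = x$, so $1$ is a two-sided neutral; evaluating at $y=1$ and using the freshly established right neutrality gives $x\cdot x^{-1} = 1$. Hence $Q$ is a magma with two-sided inverses satisfying the hypothesis identity, and Theorem \ref{Th:TwoSided} completes the proof. The main obstacle throughout is the second step: the implicit two-sided use of $xx^{-1}=1$ in the original proofs has no free one-sided counterpart and must be replaced, in each case, by one or two extra applications of the governing identity combined with the derived weak right-neutral facts.
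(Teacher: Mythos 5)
Your overall strategy --- show that the left neutral element and the left inverses are in fact two-sided, then invoke Theorem \ref{Th:TwoSided} --- is exactly the reduction the paper uses, and your final propagation step is correct: once the left inverse property $x^{-1}\cdot xy=y$ is known, $L_{x^{-1}}$ is injective (it has the left inverse $L_{(x^{-1})^{-1}}$) and surjective (it has the right inverse $L_x$), hence bijective with inverse $L_x$, so $x\cdot x^{-1}y=y$, whence $x\cdot 1=x$ (take $y=x$) and then $x\cdot x^{-1}=1$. The trouble is that the difficulty is concentrated exactly where you leave things unexecuted, and two of your four reductions would fail as stated. For \eqref{Eq:M1} you propose to obtain flexibility ``as in Theorem \ref{Th:Moufang}'', i.e.\ by setting $z=1$; but that derivation needs $x\cdot 1=x$, which is precisely what is unavailable, and the weak identity $(y\cdot 1)z=yz$ does not rescue it, because in $(xy\cdot x)1=x(y\cdot x1)$ the term $x1$ also occurs as a \emph{right} factor. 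For \eqref{Eq:M2}, converting to \eqref{Eq:RB} via flexibility is fine, but ``the right-Bol analog of the same computation'' cannot finish: that analog uses the Bol identity together with the \emph{matching} one-sided data, and here the chirality is mismatched. Indeed, the dual of the $3\times3$ example in Section \ref{Sc:OneSided} is a groupoid with a left neutral element, two-sided inverses, satisfying \eqref{Eq:RB}, which is not a loop; so no argument from \eqref{Eq:RB} plus the left-sided hypotheses alone can succeed. Flexibility must be used a second time, as in the paper's direct computation $yx=(x'x)y\cdot x=x'\cdot x(yx)=x'\cdot (xy)x$, which with $x\mapsto x'$, $y\mapsto x$ gives $xx'=x''\cdot (x'x)x'=x''x'=1$ and then $x1=(xx')x=x(x'x)=x$.

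For \eqref{Eq:LB} and \eqref{Eq:C} your plan has the right shape, but the step carrying all the weight --- the one-sided replacement for $x^{-1}\cdot x(x\cdot x^{-1}y)=y$ in \eqref{Eq:AuxLB1}, and for the chain ending in $(x^{-1})^2=(x^2)^{-1}$ in Theorem \ref{Th:C} --- is asserted to follow from ``judicious substitutions'' and is never produced; observing that $(x^{-1}\cdot 1)x=1$ does not by itself repair the implicit two-sided uses of $xx^{-1}=1$ and of $x\cdot 1$ in right-factor position. (For comparison, the paper gives a human proof only for \eqref{Eq:M2} and delegates \eqref{Eq:LB}, \eqref{Eq:M1}, \eqref{Eq:C} to \texttt{Prover9}.) So the proposal is a correct frame around a hole: the two Moufang reductions need to be rerouted, and the Bol and C computations still need to be carried out.
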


\begin{proof}
Thanks to Theorem \ref{Th:TwoSided}, it suffices to show that $Q$ has a
two-sided neutral element and two-sided inverses. This is once again easily
accomplished with \texttt{Prover9}. Here is a human proof for the identity
\eqref{Eq:M2}:

Assume that $1x=x$ and $x'x=1$ for every $x\in Q$. With $x=1$, \eqref{Eq:M2}
yields the flexible law. By \eqref{Eq:M2} and flexibility, $yx = (x'x)y\cdot x
= x'\cdot x(yx) = x'\cdot (xy)x$. Using $x'$ instead of $x$ and $x$ instead of
$y$ in the last equality, we deduce $xx' = x''\cdot (x'x)x' = x''\cdot 1x' =
x''x'= 1$, so the inverses are two-sided. Then $x=1x = (xx')x = x(x'x) = x1$,
and the left neutral element $1$ is two-sided, too.
\end{proof}

Let
\begin{equation}\label{Eq:RB}
    x(yz\cdot y) = (xy\cdot z)y\tag{RB}
\end{equation}
be the right Bol identity, the dual to \eqref{Eq:LB}. Since \eqref{Eq:M1} is
dual to \eqref{Eq:M2} and \eqref{Eq:C} is self-dual, we have:

\begin{corollary}\label{Cr:Right}
Let $Q$ be a groupoid with a right neutral element and right inverses
satisfying one of \eqref{Eq:RB}, \eqref{Eq:M1}, \eqref{Eq:M2}, \eqref{Eq:C}.
Then $Q$ is a loop.
\end{corollary}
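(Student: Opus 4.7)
The plan is to deduce the corollary from Theorem~\ref{Th:OneSided} by the standard duality trick, which is the natural route given that the preceding paragraph already observes that \eqref{Eq:M1} is dual to \eqref{Eq:M2}, that \eqref{Eq:RB} is (by definition) dual to \eqref{Eq:LB}, and that \eqref{Eq:C} is self-dual.

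More concretely, I would introduce the opposite groupoid $Q^{\mathrm{op}}$, with multiplication $x\ast y = y\cdot x$. If $1$ is a right neutral element in $Q$, then $1\ast x = x\cdot 1 = x$, so $1$ is a left neutral element in $Q^{\mathrm{op}}$; and if $x'$ is a right inverse of $x$ in $Q$, i.e.\ $x\cdot x' = 1$, then $x'\ast x = x\cdot x' = 1$, so $x'$ is a left inverse of $x$ in $Q^{\mathrm{op}}$. Thus $Q^{\mathrm{op}}$ is a groupoid with a left neutral element and left inverses.

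Next I would check that each of the identities \eqref{Eq:RB}, \eqref{Eq:M1}, \eqref{Eq:M2}, \eqref{Eq:C} on $Q$ translates to one of \eqref{Eq:LB}, \eqref{Eq:M2}, \eqref{Eq:M1}, \eqref{Eq:C} on $Q^{\mathrm{op}}$, by reversing the order of products and relabelling variables. These are routine rewrites, and indeed \eqref{Eq:RB} is already defined in the text as the dual of \eqref{Eq:LB}, while the duality between \eqref{Eq:M1} and \eqref{Eq:M2} and the self-duality of \eqref{Eq:C} were noted explicitly in the paragraph preceding the corollary. In every case, $Q^{\mathrm{op}}$ satisfies the hypotheses of Theorem~\ref{Th:OneSided} and is therefore a loop.

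Finally, since $Q^{\mathrm{op}}$ is a loop, every equation $x\ast y = z$ has a unique solution in any specified unknown; translating back, every equation $y\cdot x = z$ in $Q$ has a unique solution in any specified unknown, which is exactly the definition of $Q$ being a loop. There is no real obstacle here: the only thing to be careful about is that the duality exchanges left and right consistently (neutral element, inverses, and identities), but once that bookkeeping is done the corollary follows immediately from Theorem~\ref{Th:OneSided}.
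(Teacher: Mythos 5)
Your proof is correct and matches the paper's intended argument exactly: the corollary is stated immediately after the remark that \eqref{Eq:RB} is dual to \eqref{Eq:LB}, \eqref{Eq:M1} is dual to \eqref{Eq:M2}, and \eqref{Eq:C} is self-dual, so the paper is implicitly invoking precisely the opposite-groupoid reduction to Theorem~\ref{Th:OneSided} that you spell out. Your version just makes the bookkeeping explicit, which is fine.
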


The left Bol identity \eqref{Eq:LB} cannot be added to the list of identities
in Corollary \ref{Cr:Right}, as the following example shows.

\begin{example} Consider this groupoid:
\begin{displaymath}
    \begin{array}{c|ccc}
        &0&1&2\\
        \hline
        0&0&2&1\\
        1&1&0&2\\
        2&2&1&0
    \end{array}
\end{displaymath}
It has a right neutral element, two-sided inverses, satisfies \eqref{Eq:LB},
but it is not a loop.
\end{example}

However, we have:

\begin{theorem}
Let $Q$ be a groupoid with a two-sided neutral element and right inverses. If
$Q$ satisfies \eqref{Eq:LB} then $Q$ is a loop.
\end{theorem}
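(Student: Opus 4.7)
The plan is to reduce to Theorem~\ref{Th:TwoSided}: any magma with two-sided neutral element and two-sided inverses that satisfies \eqref{Eq:LB} is a loop. Since $Q$ already has a two-sided $1$, it suffices to upgrade the assumed right inverses to two-sided inverses. Writing $x^\rho$ for a chosen element with $xx^\rho=1$, the task reduces to proving $x^\rho x=1$ for every $x\in Q$.

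The key move is to substitute $x\mapsto x^\rho$ and $y\mapsto x$ in \eqref{Eq:LB}. The left-hand side $(x\cdot yx)z$ becomes $(x^\rho\cdot xx^\rho)z$; since $xx^\rho=1$ and $x^\rho\cdot 1=x^\rho$, this collapses to $x^\rho z$. The right-hand side becomes $x^\rho(x\cdot x^\rho z)$. Hence
\[
    x^\rho z \;=\; x^\rho(x\cdot x^\rho z) \qquad \text{for every } z\in Q.
\]

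To extract $x^\rho x=1$, apply the right-inverse hypothesis to $x^\rho$ itself: there is some $x^{\rho\rho}\in Q$ with $x^\rho x^{\rho\rho}=1$. Specializing the displayed identity at $z:=x^{\rho\rho}$, the left-hand side is $x^\rho x^{\rho\rho}=1$, while on the right-hand side the inner product $x^\rho x^{\rho\rho}$ again collapses to $1$, leaving $x^\rho(x\cdot 1)=x^\rho x$ after one use of the right neutral axiom. Thus $x^\rho x=1$, so $x^\rho$ is a two-sided inverse of $x$, and Theorem~\ref{Th:TwoSided} finishes the proof.

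There is no serious obstacle: the argument is a single substitution into \eqref{Eq:LB} followed by one well-chosen specialization. The only subtle point is that the second-order right inverse $x^{\rho\rho}$ must be taken relative to $x^\rho$ (not to $x$); it is exactly this choice that produces the cancellation inside $x^\rho(x\cdot x^\rho x^{\rho\rho})$ and converts the universal identity above into the desired equation $x^\rho x=1$.
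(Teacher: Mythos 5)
Your proof is correct and is essentially the paper's own argument: both make the single substitution $x\mapsto x^\rho$, $y\mapsto x$, $z\mapsto x^{\rho\rho}$ in \eqref{Eq:LB} (where $x^{\rho\rho}$ is a right inverse of $x^\rho$) to conclude $x^\rho x=1$, and then finish by Theorem~\ref{Th:Bol}. The only difference is cosmetic: the paper rewrites $x'x=x'(x\cdot x'x'')$ directly, while you first record the universal identity $x^\rho z=x^\rho(x\cdot x^\rho z)$ and then specialize $z$.
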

\begin{proof}
Let $x1=1x=x$, $xx'=1$ for all $x\in Q$. By \eqref{Eq:LB}, $x'x = x'(x1) =
x'(x\cdot x'x'') = (x'\cdot xx')x''$. Now, $x'' = (x'1)' = (x'\cdot xx')'$, and
thus the previous equality yields $x'x = (x'\cdot xx')(x'\cdot xx')' = 1$. We
are done by Theorem \ref{Th:Bol}.
\end{proof}

For related results on left loops, we refer the reader to \cite{SharmaI},
\cite{SharmaII}. As for the question \emph{If a quasigroup satisfies a given
identity of Bol-Moufang type, is it a loop?}, see \cite{KunenI} and
\cite{PhVoQuasi}.

\section{Acknowledgement}

\noindent Our investigations were aided by the equational reasoning tool
\texttt{Prover9}. Proofs of Theorems \ref{Th:Bol}, \ref{Th:Moufang}, \ref{Th:C}
presented here are significantly simpler than those produced by
\texttt{Prover9}, however. We thank Michael Kinyon for useful discussions, and
for bringing Theorem \ref{Th:Bol} to our attention. We also thank the anonymous
referee for asking about one-sided inverses and neutral elements. Section
\ref{Sc:OneSided} was written in response to his/her inquiry.


\bibliographystyle{plain}

\begin{thebibliography}{99}

\bibitem{BaKi} G.~E.~Bates and F.~Kiokemeister,
\emph{A note on homomorphic mappings of quasigroups into multiplicative
systems}, Bull.\ Amer.\ Math.\ Soc.\ \textbf{54} (1948), 1180--1185.

\bibitem{Br1971} R.~H.~Bruck, A Survey of Binary Systems, third
printing, corrected, \emph{Ergebnisse der Mathematik und Ihrer Grenzgebiete},
New Series \textbf{20}, Springer-Verlag, 1971.

\bibitem{CoRo} C.~J.~Colbourn and A.~Rosa, Triple systems,
\emph{Oxford Mathematical Monographs}, The Clarendon Press, Oxford University
Press, New York, 1999.

\bibitem{Co} J.~H.~Conway, \emph{A simple construction for the
Fischer-Griess monster group}, Invent.\ Math.\ \textbf{79} (1985), 513--540.

\bibitem{DeKe} J.~D\' enes, A.~D.~Keedwell, \textit{Latin Squares and their
Applications}, Akad\'emiai Kiad\'o, Budapest, 1974.

\bibitem{Do} S.~Doro, \emph{Simple Moufang loops},
Math.\ Proc.\ Cambridge Philos.\ Soc. \textbf{83} (1978), 377--392.

\bibitem{Ev} T.~Evans, \emph{Homomorphisms of non-associative
systems}, J.~London Math.~Soc. \textbf{24} (1949), 254--260.

\bibitem{Fe} F.~Fenyves, \emph{Extra loops II, On loops with
identities of Bol-Moufang type}, Publ. Math. Debrecen \textbf{16} (1969),
187--192.

\bibitem{Hall} M.~Hall, The theory of groups, The Macmillan Company, 1959.

\bibitem{GoMi} E.~G.~Goodaire, E.~Jespers and C.~Polcino Milies, Alternative
loop rings, \emph{North-Holland Mathematics Studies} \textbf{184},
North-Holland Publishing Co., Amsterdam, 1996.

\bibitem{Ki} H.~Kiechle, Theory of K-loops, \emph{Lectue
Notes in Mathematics} \textbf{1778}, Springer Verlag, 2002.

\bibitem{KiPhVo} M.~K.~Kinyon, J.~D.~Phillips, and P.~Vojt\v{e}chovsk\'y,
\emph{C-loops: Extensions and constructions}, Journal of
Algebra and its Applications \textbf{6} (2007), no. \textbf{1}, 1--20.

\bibitem{KunenI} K.~Kunen, \emph{Quasigroups, loops, and associative laws},
J. Algebra \textbf{185} (1996), no. \textbf{1}, 194--204.

\bibitem{Mann} H.~B.~Mann, \emph{On certain systems which are almost groups},
Bull. Amer. Math. Soc. \textbf{50} (1944), 879--881.

\bibitem{Mc} W.~W.~McCune, \texttt{Prover9} and \texttt{Mace}, download at
http://www.prover9.org

\bibitem{Na} G.~P.~Nagy, \emph{A class of proper simple Bol loops}, submitted,
available at arXiv:math/0703919.

\bibitem{OrVo} N.~Ormes and P.~Vojt\v{e}chovsk\'y,
\emph{Powers and alternative laws}, Comment. Math. Univ. Carolin. \textbf{48},
no. \textbf{1} (2007), 25--40.

\bibitem{Pf1990} H.~O.~Pflugfelder, Quasigroups and Loops:
Introduction, \emph{Sigma Series in Pure Mathematics} \textbf{7}, Heldermann
Verlag Berlin, 1990.

\bibitem{Pf2000} H.~O.~Pflugfelder, \emph{Historical notes
on loop theory}, Comment.\ Math.\ Univ.\ Carolin.\ \textbf{41}, no.\ \textbf{2}
(2000), 359--370.

\bibitem{PV} J.~D.~Phillips and
P.~Vojt\v{e}chovsk\'y, \emph{C-loops: An introduction}, Publicationes
Mathematicae Debrecen \textbf{68} (2006), nos. \textbf{1}--\textbf{2},
115--137.

\bibitem{PhVo2004} J.~D.~Phillips and
P.~Vojt\v{e}chovsk\'y, \emph{The varieties of loops of Bol-Moufang type},
Algebra Universalis \textbf{54} (2005), no. \textbf{3}, 259--271.

\bibitem{PhVoQuasi} J.~D.~Phillips and
P.~Vojt\v{e}chovsk\'y, \emph{The varieties of quasigroups of Bol-Moufang type:
An equational reasoning approach}, J.~Algebra \textbf{293} (2005), 17--33.

\bibitem{Problem} \emph{Problem} \textbf{10888}, American
Mathematical Monthly \textbf{110}, no.\ \textbf{4} (April 2003), 347.

\bibitem{Ro} D.~A.~Robinson,
\emph{Bol loops}, Trans. Amer. Math. Soc. \textbf{123} (1966), 341--354.

\bibitem{Sc} R.~D.~Schafer, An Introduction to
Nonassociative Algebras, \emph{Pure and Applied Mathematics} \textbf{22},
Academic Press, 1966.

\bibitem{SharmaI} B.~L.~Sharma, \emph{Left loops which satisfy the left Bol identity},
Proc. Amer. Math. Soc. \textbf{61} (1976), no. \textbf{2}, 189--195.

\bibitem{SharmaII} B.~L.~Sharma, \emph{Left loops which satisfy the left Bol identity II},
Ann. Soc. Sci. Bruxelles Sér. I \textbf{91} (1977), no. \textbf{2}, 69--78.

\bibitem{SpVe2000} T.~A.~Springer and
F.~D.~Veldkamp, Octonions, Jordan Algebras and Exceptional Groups,
\emph{Springer Monographs in Mathematics}, Springer Verlag, 2000.

\bibitem{Sm} W.~D.~Smith, \emph{Inclusions among
diassociativity-related loop properties}, preprint.

\bibitem{TiWe2003} J.~Tits and R.~M.~Weiss, Moufang
polygons, \emph{Springer Monographs in Mathematics}, Springer Verlag, 2003.

\bibitem{Un} A.~A.~Ungar, \emph{Beyond Einstein Addition Law and its
Gyroscopic Thomas Precession: The Theory of Gyrogroups and Gyrovector Spaces},
Kluwer Academic Publishers, Doldrecht-Boston-London, 2001.

\end{thebibliography}

\end{document}